\newcommand{\R}{\mathbb{R}}
\newcommand{\un}{\mathbf{1}\!\!{\rm I}} 
\newcommand{\be}{\begin{equation}} 
\newcommand{\ee}{\end{equation}}
\newcommand{\bea}{\begin{eqnarray}} 
\newcommand{\eea}{\end{eqnarray}}
\newcommand{\bean}{\begin{eqnarray*}} 
\newcommand{\eean}{\end{eqnarray*}}
\newcommand{\rf}[1]{(\ref {#1})}
\def\dx{\,{\rm d}x}
\def\dy{\,{\rm d}y}
\def\dr{\,{\rm d}r}
\def\dta{\,{\rm d}\tau}
\def\dl{\,{\rm d}\lambda}
\def\s{\sigma}
\def\p{\partial}
\def\g{\gamma}
\def\r{\varrho}
\def\xn{|\!|\!|}
\def\mn{|\!\!|}
\def\mn2{|\!\!|_{M^{d(p-1)/2}}}
\def\a{\alpha}
\def\J{\widehat J}
\def\ap{\frac{\a}{p-1}}
\def\ts{\tilde s}
\newtheorem{theorem}{Theorem}
\newtheorem{proposition}[theorem]{Proposition}
\newtheorem{corollary}[theorem]{Corollary}
\theoremstyle{definition}
\theoremstyle{remark}
\newtheorem{remark}[theorem]{Remark}
\author[P. Biler]{Piotr Biler}
\address{\small Instytut Matematyczny, Uniwersytet Wroc\l awski,
 pl. Grunwaldzki 2/4, \hbox{50-384} Wroc\-\l aw, Poland}
\email{Piotr.Biler@math.uni.wroc.pl}
\title[Nonlinear nonlocal heat equation]{Blowup of solutions for \\ nonlinear  nonlocal heat equations}
\begin{document}

\begin{abstract} 
Blowup analysis for solutions of  a general evolution equation with nonlocal diffusion and localized source is performed. 
By comparison with recent results on global-in-time solutions, a dichotomy result is obtained. 
\end{abstract}

\keywords{nonlinear nonlocal heat equation,  blowup 
of solutions}

\subjclass[2010]{35K55, 35B44}

\date{\today}

\thanks{The author, partially  supported by the NCN grant {2016/23/B/ST1/00434}, thanks  Philippe Souplet and Miko{\l}aj Sier\.z\c ega for many  interesting conversations.} 

\maketitle

\baselineskip=20.5pt
\section{Introduction}\label{intro}

We consider here nonnegative solutions $u=u(x,t)$ of the Cauchy problem 
\bea
u_t&=&J\ast u-u+F(u), \ \ \ x\in\R^d,\ t>0,\label{eq}\\
u(x,0)&=&u_0(x)\ge 0,\label{in}
\eea 
with the linear nonlocal diffusion operator defined by  a nonnegative radially symmetric function $J$ satisfying $\int_{\R^d}J(x)\dx=1$, and with the  nonlinearity (a localized source) defined by a convex function $F:[0,\infty)\to[0,\infty)$, $F\in C^1$, $F(0)=0$ and satisfying 
\be 
\int^\infty\frac{{\rm d}u}{F(u)}<\infty.\label{F}
\ee
Similar evolution equations extending the classical nonlinear heat equation in \cite{QS}
\be
u_t=\Delta u+|u|^{p-1}u\label{nlh}
\ee
and the equations with  nonlocal diffusion operators defined by fractional powers of Laplacian 
\bea
u_t&=&-(-\Delta )^{\a/2}u+|u|^{p-1}u,\ \ x\in {\mathbb R}^d,\ t>0,\label{nlh-a}\\ 
u(x,0)&=&u_0(x),\label{ini} 
\eea
and even more general nonlinearities have been studied in, e.g., \cite{CCR} (the linear case) and  \cite{A,GM-Q,Su} (the nonlinear case). 

Equations of the type \rf{eq} are related to the differential and integrodifferential equations \rf{nlh} and \rf{nlh-a} by their long time asymptotic behavior determined by the linear equations \rf{lin}--\rf{lin-a}, and studied in, e.g., \cite{A,CCR,GM-Q}. 

The main result of this paper is  Theorem \ref{blow} 
 on local-in-time solutions that cannot be continued to global in time ones.

\subsection*{Notation}
The homogeneous Morrey spaces $M^s_q(\R^d)$ modeled on the Lebesgue space $L^q(\R^d)$, $q\ge 1$, are defined for $u\in L^q_{\rm loc}(\R^d)$ and  $1\le q\le s<\infty$, by their norms 
\be
|\!\!| u|\!\!|_{M^s_q}\equiv \left(\sup_{R>0,\, x\in\R^d}R^{d(q/s-1)} 
\int_{\{|y-x|<R\}}|u(y)|^q\dy\right)^{1/q}
=\sup_{R>0,\, x\in\R^d}R^{d(1/s-1/q)}\left\|\un_{B(x,R)}u\right\|_{q}<\infty,\label{hMor}
\ee 
with the convention $M^s_1(\R^d)=M^s(\R^d)$.  

The asymptotic relation $f\approx g$ means that $\lim_{s\to\infty}\frac{f(s)}{g(s)}=1$ 
and $f\asymp g$ is used whenever $\lim_{s\to\infty}\frac{f(s)}{g(s)}\in (0,\infty)$. 
\medskip

\section{Blowup for a general nonlinear source, nonlocal diffusion  model}\label{bl1}

Our aim in this paper is to give a simple proof of blowup of solutions for the Cauchy problem \rf{eq}--\rf{in} based on the classical idea of Fujita in \cite{Fu}. 
We believe that this proof is simpler than monotonicity arguments given in \cite{A}. 
Moreover, this argument applies to a  class of initial data much larger than in \cite[Theorem 2.4]{A}, gives explicit and rather general sufficient conditions on functions $u_0$ in \rf{in} in order to solutions of \rf{eq}--\rf{in} blow up in a~finite time, as well as estimates on the blowup time, cf. also \cite{Su}. 

The linear nonlocal diffusion operator ${\mathcal A}:L^1(\R^d)\to L^1(\R^d)$ defined by 
\be
{\mathcal A}u(x)=(J\ast u)(x)-u(x)\label{A}
\ee
generates the semigroup of linear convolution operators ${\rm e}^{t{\mathcal A}}$ with kernels $k_t\in L^1(\R^d)$, normalized so that $\int_{\R^d} k_t(x)\dx=1$,   which are defined by the inverse Fourier transform ${\mathcal F}^{-1}$ on $\R^d$
\be
k_t(x)={\mathcal F}^{-1}\left({\rm e}^{t(\widehat J(\xi)-1)}\right)(x).\label{kernel}
\ee 
Typical and the most interesting examples of functions $J$ are those with their Fourier transforms $\J$ satisfying  
\be
\J(\xi)=1-A|\xi|^\a+o(|\xi|^\a),\ \ {\rm as }\ \ \xi\to 0,\label{FJ}
\ee
with $A>0$, $\a=2$ (corresponding to, e.g., the case of smooth, compactly supported functions $J$), and those with $\a\in(0,2)$, cf. \cite{CCR}. As it was studied in \cite{CCR}, the long time asymptotics of solutions of the linear Cauchy problem 
\be 
v_t={\mathcal A}v,\ \ v(.,0)=v_0,\nonumber
\ee 
 is then determined either by that of the classical heat equation 
 \be 
 z_t=\Delta z\ \ \ {\rm if}\ \ \ \a=2,\label{lin}
 \ee
 or by the fractional heat equation 
 \be
 z_t=-(-\Delta)^{\a/2}z\ \ \ {\rm in\ the\ case}\ \ \ \a\in(0,2),\label{lin-a}
 \ee
  see \rf{approx} below   for a precise statement from  \cite[Theorem 1]{CCR}. 
Other examples of functions $J$ with $\J$ like $\J(\xi)\approx 1+A|\xi|^2\log|\xi|$ as $ |\xi|\to 0$, are in \cite[Th. 5.1]{CCR} (and then $J(x)\approx\frac{c}{|x|^{d+2}}$ as $|x|\to\infty$). 
 These are examples of operators $\mathcal A$ whose kernels have  ``heavy tails'': $J(x)\approx\frac{c}{|x|^n}$ as $|x|\to\infty$ with some $n>d$, and they are discussed in \cite{A} and \cite{CCR}. 
 For them,  if $n\in(d,d+2)$, then $\a=n-d\in(0,2)$ holds. 
Their semigroup kernels have also heavy tails unlike the Gauss-Weierstrass kernel of the heat semigroup for $\a=2$. 

\begin{theorem}\label{blow}
Suppose that $u_0\ge 0$ satisfies for some $T>0$ the condition 
\be
\frac{W_T(0)}{h^{-1}(T)}>1\label{blo}
\ee
with the moment $W_T(t)$ defined for $t\in[0,T)$ as 
\be
W_T(t)=\int_{\R^d}k_{T-t}(x)u(x,t)\dx.\label{moment}
\ee
Here the decreasing function $h(w)=\int_w^\infty\frac{{\rm d}u}{F(u)}$ satisfies  $h(0)=\infty$, $h(\infty)=0$, by assumption  \rf{F} on the convex function $F$. 
Then, any local in time classical 
solution $u=u(x,t)$ of the Cauchy problem \rf{eq}--\rf{in}  cannot be continued beyond $t=T$.  
\end{theorem}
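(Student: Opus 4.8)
The plan is to adapt Fujita's eigenfunction-type argument, using the heat kernel $k_{T-t}$ as a (time-dependent) test function against which we integrate the equation. Set $W(t)=W_T(t)=\int_{\R^d}k_{T-t}(x)u(x,t)\dx$. The first step is to differentiate $W$ in $t$ and use the equation \rf{eq} together with the backward heat equation satisfied by the kernel. Since $k_s = {\rm e}^{s\mathcal A}\delta_0$, we have $\p_s k_s = \mathcal A k_s$, hence $\p_t k_{T-t} = -\mathcal A k_{T-t}$. Because $\mathcal A$ is self-adjoint with respect to the $L^2$ pairing (as convolution with the radially symmetric $J$), integrating by parts gives
\[
W'(t)=\int_{\R^d}\bigl(-\mathcal A k_{T-t}\bigr)u\dx+\int_{\R^d}k_{T-t}\bigl(\mathcal A u+F(u)\bigr)\dx
=\int_{\R^d}k_{T-t}F(u)\dx,
\]
the two $\mathcal A$-terms cancelling. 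The main technical point to check carefully here is the legitimacy of differentiating under the integral sign and of the integration by parts, given only a local-in-time classical solution; one controls this using that $k_s\in L^1$ with $\int k_s=1$ and, if needed, the decay/regularity of $k_s$ for $s$ bounded away from $0$ (we only ever use $s=T-t\in(0,T]$).

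The second step is a Jensen-type inequality. Since $k_{T-t}\ge 0$ with $\int_{\R^d}k_{T-t}\dx=1$, it is a probability density, and $F$ is convex with $F(0)=0$, so
\[
W'(t)=\int_{\R^d}k_{T-t}F(u)\dx\ \ge\ F\!\left(\int_{\R^d}k_{T-t}u\dx\right)=F\bigl(W(t)\bigr).
\]
Thus $W$ satisfies the differential inequality $W'(t)\ge F(W(t))$ on $[0,T)$, with $W$ nondecreasing (as $F\ge 0$), hence $W(t)\ge W(0)>0$.

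The third step is the ODE comparison that forces blowup. Recall $h(w)=\int_w^\infty \frac{{\rm d}u}{F(u)}$ is strictly decreasing with $h(0)=\infty$, $h(\infty)=0$ by \rf{F}. From $W'(t)\ge F(W(t))>0$ we get $-\frac{d}{dt}h(W(t))=\frac{W'(t)}{F(W(t))}\ge 1$, so integrating over $[0,t]$ yields $h(W(0))-h(W(t))\ge t$, i.e. $h(W(t))\le h(W(0))-t$. If the classical solution existed on all of $[0,T]$, then $W(t)$ would be finite for $t<T$ and we could let $t\uparrow T$: the hypothesis \rf{blo} says $W(0)>h^{-1}(T)$, equivalently $h(W(0))<T$ (since $h$ is decreasing), so $h(W(t))\le h(W(0))-t$ becomes negative before $t$ reaches $T$ — contradicting $h>0$ on $(0,\infty)$. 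Equivalently, $W(t)\ge h^{-1}\bigl(h(W(0))-t\bigr)\to\infty$ as $t\uparrow h(W(0))<T$, so $\int_{\R^d}k_{T-t}u(x,t)\dx$ blows up at some time strictly before $T$, which is impossible for a classical solution on $[0,T]$. Hence $u$ cannot be continued beyond $t=T$.

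The step I expect to be the main obstacle is the first one: justifying rigorously that $W(t)$ is differentiable and that the two $\mathcal A$-contributions cancel exactly, i.e.\ the self-adjointness/integration-by-parts identity $\int (\mathcal A k)u = \int k(\mathcal A u)$ in this nonlocal setting, under the mild regularity of a local classical solution and with the time-dependent kernel $k_{T-t}$ degenerating as $t\uparrow T$. Everything after that — Jensen and the ODE comparison — is routine. I would handle the cancellation by writing $\mathcal A$ explicitly as $J\ast(\cdot)-(\cdot)$, using Fubini (valid since $J, k_s, u(\cdot,t)$ are integrable on the relevant sets) and the symmetry $J(-x)=J(x)$ to move the convolution onto the other factor.
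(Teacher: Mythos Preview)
Your proposal is correct and follows essentially the same route as the paper: differentiate the moment $W_T(t)$, cancel the $\mathcal A$-terms via the backward equation for $k_{T-t}$ and the symmetry of $\mathcal A$, apply Jensen to obtain $W'\ge F(W)$, and integrate to reach a contradiction with $h(W(0))<T$. The paper presents exactly this argument (with the same formal level of justification for the differentiation/cancellation step that you flag as the only delicate point).
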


\begin{proof}
By definition \rf{moment}, we have $W_T(t)=k_{T-t}\ast u(.,t)(0)$ and, of course, $z(.,t)=k_{T-t}$ solves the backward diffusion equation
\be 
z_t=-{\mathcal A}z,\ \ \ z(.,T)=\delta_0\nonumber. 
\ee 
Let us compute the time derivative of $W_T(t)$
\bea
\frac{{\rm d}}{{\rm d}t}W_T(t)&=& \int k_{T-t}(x)\frac{\p}{\p t}u(x,t)\dx + \int \frac{\p}{\p t}k_{T-t}(x)u(x,t)\dx\nonumber\\
&=&\int k_{T-t}(x)({\mathcal A}u)(x,t)\dx+\int k_{T-t}(x)F(u(x,t))\dx +\int (-{\mathcal A}k_{T-t}(x))u(x,t)\dx\nonumber\\
&=&\int k_{T-t}(x)F(u(x,t))\dx\nonumber\\
&\ge& F(W_T(t))\label{der}
\eea
by the symmetry property of the semigroup,  and the Jensen inequality in the last line. 
Integrating this from $0$ to $t$  and passing to the limit  $t\nearrow T$, we obtain 
$$
h(W_T(0))-h(W_T(t))\ge t.
$$
If initially $W_T(0)>h^{-1}(T)$ holds, then taking into account the property $\lim_{w\searrow 0}h(w)=\infty$, we arrive at 
$$\lim_{t\nearrow T}W_T(t)=\infty.$$ 
Finally, if $\lim_{t\nearrow T}W_T(t)=\infty$ then $\limsup_{t\nearrow T,\, x\in\R^d}u(x,t)=\infty$ which means that $u$ blows up not later than at $t=T$. 
\end{proof}

\begin{remark} \label{Besov-cond}
In some particular cases, under more restrictive assumption than condition \rf{F}, the sufficient condition for blowup of solutions of \rf{eq}--\rf{in} can be described in a more explicit way than condition 
\be
\sup_{T>0}\frac{W_T(0)}{h^{-1}(T)}>1\label{blT}
\ee
in Theorem \ref{blow}. 

For the nonlinear heat equation \rf{nlh} the condition \rf{blo} implies 
\be
\sup_{T>0}T^{\frac{1}{p-1}}\left\| {\rm e}^{T\Delta}u_0\right\|_\infty>  \left(\frac{1}{p-1}\right)^{\frac{1}{p-1}},\label{Besov-nlh}
\ee 
a sufficient condition for the blowup of \rf{nlh} which was derived in \cite{Fu},  and has been analyzed in a~recent paper \cite{B-bl}. 
\end{remark}

We have, in this direction, the following 
\begin{proposition}\label{particular}
(i) If the infinitesimal generator $\mathcal A$ of the semigroup ${\rm e}^{t{\mathcal A}}$ satisfies \rf{FJ} for an $\a\in(0,2]$, $F(u)=cu^p$ with some $p>1$ and $c>0$, $u_0\in L^1(\R^d)$, $\widehat{u_0}\in L^1(\R^d)$, then 
\be
\sup_{T>0}T^{\frac{1}{p-1}}\left\|{\rm e}^{-T(-\Delta)^{\a/2}}u_0\right\|_\infty\gg 1\label{bl-p}
\ee 
is a sufficient condition of blowup of solution of problem \rf{eq}--\rf{in}.
Condition \rf{bl-p} is equivalent to a large value of the Morrey space norm of $u_0$  
\be
\|u_0\|_{M^{d(p-1)/\a}}\gg 1.\label{bl-M}
\ee

(ii) Moreover, if $1<p<p_{\rm F}$ where $p_{\rm F}=1+\frac\a{d}$ is the so-called Fujita exponent, then each nontrivial nonnegative solution $u\not\equiv 0$ blows up in a finite time.  
\end{proposition}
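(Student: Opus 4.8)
For part (i), the plan is to apply Theorem \ref{blow} with $F(u)=cu^p$, so that $h(w)=\int_w^\infty \frac{\dr}{cr^p}=\frac{1}{c(p-1)}w^{1-p}$ and hence $h^{-1}(T)=\bigl(c(p-1)T\bigr)^{-1/(p-1)}$. Condition \rf{blo} then reads $W_T(0)\,\bigl(c(p-1)T\bigr)^{1/(p-1)}>1$ for some $T$, i.e. $\sup_{T>0}T^{1/(p-1)}W_T(0)\gg 1$, where $W_T(0)=k_T\ast u_0(0)$. The key step is to replace the kernel $k_T$ of ${\rm e}^{T{\mathcal A}}$ by the kernel of ${\rm e}^{-T(-\Delta)^{\a/2}}$: under \rf{FJ} and the hypotheses $u_0\in L^1$, $\widehat{u_0}\in L^1$, the asymptotic equivalence \rf{approx} from \cite{CCR} gives, via the Fourier representation \rf{kernel} and dominated convergence (using $\widehat{u_0}\in L^1$ to control the integrals uniformly), that $T^{1/(p-1)}k_T\ast u_0(0)$ and $T^{1/(p-1)}\bigl\|{\rm e}^{-T(-\Delta)^{\a/2}}u_0\bigr\|_\infty$ have comparable suprema over $T>0$, up to an absolute constant; since $u_0\ge 0$ and the kernels are positive and radially decreasing, the evaluation at $x=0$ realizes the sup norm. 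Thus \rf{bl-p} implies \rf{blo} for a suitable $T$, giving blowup. The equivalence of \rf{bl-p} with \rf{bl-M} is the scaling identity $\bigl\|{\rm e}^{-T(-\Delta)^{\a/2}}u_0\bigr\|_\infty \asymp T^{-d/(\a\cdot d(p-1)/\a)\cdot\ldots}$; more precisely one checks by a standard dilation argument that $\sup_{T>0}T^{1/(p-1)}\bigl\|{\rm e}^{-T(-\Delta)^{\a/2}}u_0\bigr\|_\infty$ is equivalent to the Morrey norm $\|u_0\|_{M^{d(p-1)/\a}}$, the borderline Morrey space that is critical for \rf{eq} with $F(u)=cu^p$; this is essentially the duality between the heat kernel estimate $\|{\rm e}^{-t(-\Delta)^{\a/2}}\|_{M^s\to L^\infty}\asymp t^{-d/(\a s)}$ and the exponent $s=d(p-1)/\a$ forced by $1/(p-1)=d/(\a s)$.

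For part (ii), with $1<p<p_{\rm F}=1+\a/d$, the plan is to show that the critical quantity in \rf{bl-p} is automatically infinite for every nontrivial $u_0\ge 0$. Fix $x_0$ with $\int_{B(x_0,1)}u_0>0$; then $\bigl\|{\rm e}^{-T(-\Delta)^{\a/2}}u_0\bigr\|_\infty \ge {\rm e}^{-T(-\Delta)^{\a/2}}u_0(x_0) \ge c\,T^{-d/\a}\int_{B(x_0,T^{1/\a})}u_0 \ge c'\,T^{-d/\a}$ for $T$ large, using the lower bound $k^{(\a)}_T(y)\gtrsim T^{-d/\a}$ for $|y|\lesssim T^{1/\a}$ on the fractional heat kernel. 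Hence $T^{1/(p-1)}\bigl\|{\rm e}^{-T(-\Delta)^{\a/2}}u_0\bigr\|_\infty \ge c'\,T^{1/(p-1)-d/\a}$, and the exponent $\frac{1}{p-1}-\frac{d}{\a}$ is strictly positive exactly when $p-1<\a/d$, i.e. $p<p_{\rm F}$; letting $T\to\infty$ makes the supremum infinite, so \rf{bl-p} holds trivially and part (i) yields finite-time blowup.

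The main obstacle I anticipate is the rigorous transfer in part (i) from the semigroup ${\rm e}^{t{\mathcal A}}$ to the model semigroup ${\rm e}^{-t(-\Delta)^{\a/2}}$ uniformly in the scaling parameter $T$: the asymptotics \rf{approx} of \cite{CCR} describe $k_t$ as $t\to\infty$, so one must check that the large-$T$ regime is the only one that matters for the supremum (for small $T$, $T^{1/(p-1)}k_T\ast u_0(0)\to 0$ since $k_T\ast u_0$ stays bounded by $\|u_0\|_\infty$-type quantities — here one may need a mild extra remark, or simply invoke that $u_0\in L^1$ with $\widehat{u_0}\in L^1$ forces $u_0\in L^\infty$), and then control the error term in \rf{FJ} after integrating against $\widehat{u_0}\in L^1$. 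A secondary technical point is making precise the two-sided bound between the rescaled sup-norm functional and the Morrey norm in \rf{bl-M}; this is classical (it is the characterization of $M^{d(p-1)/\a}$ as the space for which the fractional heat semigroup maps into $L^\infty$ with the right time-homogeneity) but deserves a one-line justification or a reference.
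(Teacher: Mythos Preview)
Your proposal follows essentially the same route as the paper's proof: compute $h^{-1}$ for $F(u)=cu^p$, reduce \rf{blo} to a condition of the form $T^{1/(p-1)}\bigl(k_T\ast u_0\bigr)(0)\gg 1$, transfer from ${\rm e}^{T{\mathcal A}}$ to ${\rm e}^{-T(-\Delta)^{\a/2}}$ via the large-time asymptotics \rf{approx} of \cite{CCR}, and then identify the resulting quantity with the Morrey norm; part (ii) is handled identically by letting $T\to\infty$ and using $\frac{1}{p-1}-\frac{d}{\a}>0$.

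There is one genuine slip. Your justification for passing from $W_T(0)=(k_T\ast u_0)(0)$ to $\bigl\|{\rm e}^{-T(-\Delta)^{\a/2}}u_0\bigr\|_\infty$ --- namely that ``$u_0\ge 0$ and the kernels are positive and radially decreasing, so the evaluation at $x=0$ realizes the sup norm'' --- is false: radial monotonicity of the kernel says nothing about where a convolution with a general $u_0\ge 0$ attains its maximum (take $u_0$ a bump centered away from the origin). The correct argument, which the paper uses, is \emph{translation invariance of equation \rf{eq}}: Theorem \ref{blow} applied to the shifted datum $u_0(\,\cdot\,+x_0)$ gives the condition $T^{1/(p-1)}\bigl(k_T\ast u_0\bigr)(x_0)\gg 1$, and taking the supremum over $x_0$ yields the $L^\infty$ norm. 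A minor further point: for the equivalence \rf{bl-p}$\Leftrightarrow$\rf{bl-M} the paper does not rely on a bare dilation argument but invokes the Besov characterization $\sup_{t>0}t^\g\bigl\|{\rm e}^{-t(-\Delta)^{\a/2}}u\bigr\|_\infty<\infty \iff u\in B^{-\g\a}_{\infty,\infty}$ together with the identification $B^{-\g\a}_{\infty,\infty}\cap\{u\ge 0\}=M^{d/(\a\g)}$ from \cite{Lem,Lem2}; your ``standard dilation argument'' gives one inequality but the converse needs this reference.
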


\begin{proof}
(i) Clearly, for $p>1$, $F(u)\approx cu^p$ as $u\searrow 0$, so $h(u)\approx \frac{1}{c(p-1)}u^{1-p}$, $u\searrow 0$. 
Thus, 
$h^{-1}(z)\approx z^{-\frac1{p-1}}\left(\frac{1}{c(p-1)}\right)^{\frac1{p-1}}$ as $z\to\infty$. 
Therefore, the sufficient condition for blowup \rf{blT} becomes 
\be
T^{\frac1{p-1}}\left|{\rm e}^{T{\mathcal A}}u_0(0)\right|\gg 1.\label{bl-a}
\ee
By the translational invariance of equation \rf{eq}, condition \rf{bl-a} for $u_0\ge 0$ is equivalent to 
$\sup_{T>0}T^{\frac{1}{p-1}}\left\|{\rm e}^{T{\mathcal A}}u_0\right\|_{\infty}\gg 1.$   

On the other hand, according to \cite[Theorem 1]{CCR}, for $u_0\in L^1(\R^d)$ with $\widehat{u_0}\in L^1(\R^d)$ and large $t>0$ the semigroup ${\rm e}^{t{\mathcal A}}$ applied to $u_0$ can be well approximated by  ${\rm e}^{-t(-\Delta)^{\a/2}}$ generated by $-(-\Delta)^{\a/2}$  
\be
\lim_{t\to\infty}t^{\frac{d}{\a}}\left\|{\rm e}^{t{\mathcal A}}u_0-{\rm e}^{-t(-\Delta)^{\a/2}}u_0\right\|_{\infty}=0\label{approx}
\ee
while $\left\|{\rm e}^{-t(-\Delta)^{\a/2}}u_0\right\|_{\infty}={\mathcal O}\left(t^{-\frac{d}{\a}}\right)$. 

Next, we have the equivalence 
\be
\sup_{t>0}t^\g\left\|{\rm e}^{-t(-\Delta)^{\a/2}}u\right\|_\infty<\infty\ {\rm if\ and\ only\ if\ } u\in B^{-\g\a}_{\infty,\infty}(\R^d)\label{Bes}
\ee
where $B^{-\kappa}_{\infty,\infty}$ is the homogeneous Besov space of order $-\kappa<0$. The above condition \rf{Bes} is for $u\ge 0$ equivalent to $u\in M^{\frac{d}{\a\g}}(\R^d)$, the Morrey space of order $\frac{d}{\a\g}$,  \cite[Prop. 2B)]{Lem} for $\a=2$ and a slight modification of  \cite[Sec. 4, proof of Prop. 2]{Lem2} for $\a\in(0,2)$.

Finally, condition \rf{bl-a} is equivalent to 
\be
T^{\frac1{p-1}}\left|{\rm e}^{-T(-\Delta)^{\a/2}}u_0(0)\right|\gg 1.\label{bl-aa}
\ee
which, in turn, is equivalent  for $u_0\ge 0$ to condition \rf{bl-M} 
 by above remarks. 

Note that for ${\rm e}^{t{\mathcal A}}={\rm e}^{-t(-\Delta)^{\a/2}}$ the assumptions on the initial data $u_0\in L^1(\R^d)$ with $\widehat{u_0}\in L^1(\R^d)$ can be relaxed to $u_0\in L^1(\R^d)\cap L^\infty(\R^d)$. 

Of course, condition \rf{bl-aa} is quite general, involves one free parameter $T>0$, and 
particular examples of initial data in \cite[Th. 2.3]{A} leading to blowup of solutions satisfy \rf{bl-aa}. 
\medskip 

(ii) Rewriting the quantity in \rf{bl-aa} as 
\bea
T^{\frac{1}{p-1}}\left|{\rm e}^{-T(-\Delta)^{\a/2}}u_0(0)\right|
 &=&  T^{\frac1{p-1}-\frac{d}{\a}} \int R \left(\frac{|x|}{T^\frac{1}{\a}}\right)u_0(x)\dx
\nonumber\\
&\approx&R(0) T^{\frac1{p-1}-\frac{d}{\a}}\|u_0\|_{1}\to\infty,\ \ \ {\rm for}\ \ T\to\infty,\nonumber
\eea 
we see that for each $p<1+\frac\a{d}$ and $\|u_0\|_{1}>0$,  the upper bound equals $\infty$ as claimed; remember relation \rf{approx}. 
Above, the kernel of the semigroup ${\rm e}^{-t(-\Delta)^{\a/2}}$ has selfsimilar form, and is given by $P_{t,\a}(x,t)=t^{-\frac{d}{\a}}R\left(\frac{|x|}{T^\frac{1}{\a}}\right)$ with a smooth function $R$. This satisfies the bound   
\be
0<P_{t,\a}(x)\le \frac{C}{\left( {t}^{1/\a}+|x|\right)^d},\label{GW}
\ee
and, moreover, its gradient satisfies  
\be
|\nabla P_{t,\a}(x)|\le \frac{C}{\left( {t}^{1/\a}+|x|\right)^{d+1}},\label{grGW}
\ee
following from standard estimates for the kernels of linear fractional heat equations, cf. \cite{CCR,Lem2}. 

The proof of (ii) for $p=p_{\rm F}$ and ${\rm e}^{t{\mathcal A}}={\rm e}^{-t(-\Delta)^{\a/2}}$, $\a\in(0,2)$, is in \cite{Su}. 
A rather simple new proof of the result (ii) for $\a=2$ and $p=p_{\rm F}$  is in \cite{B-bl}. 
\end{proof}


 These are counterparts of results in \cite[Remark 7, \, Theorem 2]{B-bl} for the classical nonlinear heat equation. 
These, together with results of \cite[Proposition 2.3]{BP}, lead to the following partial \, {\em dichotomy } result, similarly as was in \cite[Corollary 11]{B-bl} for the Cauchy problem \rf{nlh-a}--\rf{ini}. For analogous questions for radial solutions of chemotaxis systems, see also \cite{B-book}.

\begin{corollary}[dichotomy]\label{dich} 
There exist two positive constants $c(\a,d,p)$ and $C(\a,d,p)$ such that if $p>1+\frac{\a}{d}$   then 
\begin{itemize}
\item[(i)]
 $|\!\!| u_0|\!\!|_{M^{d(p-1)/\a}_q}<c(\a,d,p)$ for some $q\in\left(1,\frac{d(p-1)}{\a}\right)$, implies that problem \rf{nlh-a}--\rf{ini} has a~global in time, smooth  solution satisfying the time decay estimate 
 $\|u(t)\|_\infty={\mathcal O}\left(t^{-1/(p-1)}\right)$.  

\item[(ii)]
  $|\!\!| u_0|\!\!|_{M^{d(p-1)/\a}}>C(\a,d,p)$ implies that each nonnegative solution of problem \rf{nlh-a}--\rf{ini} blows up in a finite time.
\end{itemize}   
\end{corollary}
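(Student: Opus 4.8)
The strategy is to read off the two alternatives from results already in hand: the small-data global existence theory for (\rf{nlh-a}) in scaling-critical Morrey spaces, quoted from \cite[Proposition 2.3]{BP}, and the large-data blowup criterion of Proposition \ref{particular}(i).

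For assertion (i) there is essentially nothing new beyond invoking \cite[Proposition 2.3]{BP}. The point worth recording is that $M^{d(p-1)/\a}_q$ is the space invariant under the natural scaling $u(x,t)\mapsto\lambda^{\a/(p-1)}u(\lambda x,\lambda^\a t)$ of (\rf{nlh-a}): since $\|f(\lambda\cdot)\|_{M^s_q}=\lambda^{-d/s}\|f\|_{M^s_q}$, invariance of the datum's norm forces $s=d(p-1)/\a$, and the hypothesis $p>1+\frac{\a}{d}$ is precisely what makes the admissible range $q\in\left(1,\frac{d(p-1)}{\a}\right)$ nonempty. A contraction-mapping argument for the Duhamel formula $u(t)={\rm e}^{-t(-\Delta)^{\a/2}}u_0+\int_0^t{\rm e}^{-(t-s)(-\Delta)^{\a/2}}|u|^{p-1}u(s)\ds$, run in the scaling-invariant class of functions with $\sup_{t>0}t^{1/(p-1)}\|u(t)\|_\infty<\infty$ (supplemented by a uniform-in-time Morrey bound) exactly as in \cite{BP}, yields a global solution once $c(\a,d,p)$ is taken small enough, and the decay $\|u(t)\|_\infty={\mathcal O}\left(t^{-1/(p-1)}\right)$ is built into that class; smoothing of ${\rm e}^{-t(-\Delta)^{\a/2}}$ then upgrades $u$ to a smooth, classical solution.

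For assertion (ii) I would apply Proposition \ref{particular}(i) with $F(u)=u^p$ (so $c=1$, since $|u|^{p-1}u=u^p$ for $u\ge0$): that proposition states precisely that $\|u_0\|_{M^{d(p-1)/\a}}$ exceeding a threshold $C(\a,d,p)$ — the quantitative content of condition (\rf{bl-M}) — is sufficient for blowup, being equivalent to (\rf{bl-aa}) and hence to (\rf{blT}), which forces finite-time blowup via Theorem \ref{blow}. Here the Morrey space is the plain one, $M^{d(p-1)/\a}=M^{d(p-1)/\a}_1$, corresponding to $q=1$. Pinning down $C(\a,d,p)$ is just a matter of tracking the implied constants along the chain in the proof of Proposition \ref{particular}: the asymptotics $h^{-1}(z)\approx\left(\frac{1}{p-1}\right)^{1/(p-1)}z^{-1/(p-1)}$ for $F(u)=u^p$, the equivalence $\sup_{t>0}t^{1/(p-1)}\|{\rm e}^{-t(-\Delta)^{\a/2}}u_0\|_\infty\asymp\|u_0\|_{B^{-\a/(p-1)}_{\infty,\infty}}$ (the quantitative form of (\rf{Bes})), and the equivalence of that Besov norm with the Morrey norm for $u_0\ge0$ (\cite{Lem}, \cite{Lem2}). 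Since Theorem \ref{blow} bears on \emph{every} local-in-time classical solution, any nonnegative solution of (\rf{nlh-a})--(\rf{ini}) with such a datum must cease to exist in finite time.

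I would close by noting why this is only a \emph{partial} dichotomy, which is also where the genuine difficulty sits: (i) constrains the norm in $M^{d(p-1)/\a}_q$ for $q>1$, whereas (ii) constrains only the weaker norm in $M^{d(p-1)/\a}_1$, and since $\|u_0\|_{M^s_1}\lesssim\|u_0\|_{M^s_q}$ (H\"older on each ball) there is a genuine band of initial data, small in $M^{d(p-1)/\a}_1$ but large in $M^{d(p-1)/\a}_q$, for which neither alternative speaks; deciding the behaviour of solutions there is an open problem and not something I would expect to settle here. Beyond that, the only step demanding care is the Besov--Morrey equivalence for nonnegative functions in the fractional range $\a\in(0,2)$, which is covered by the cited modification of \cite{Lem2}; the rest is bookkeeping of scaling-invariant quantities.
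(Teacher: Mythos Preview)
Your proposal is correct and follows exactly the route the paper indicates: part (i) is read off from \cite[Proposition 2.3]{BP}, and part (ii) from Proposition~\ref{particular}(i), the paper itself offering no more than the sentence preceding the corollary. Your write-up is in fact more detailed than the paper's, which gives no separate proof; the extra material you include (the scaling check singling out $M^{d(p-1)/\a}_q$, the chain $\text{\rf{bl-M}}\Leftrightarrow\text{\rf{bl-aa}}\Rightarrow\text{\rf{blT}}$, and the remark on why the dichotomy is only partial) is accurate and matches the discussion the paper places after the corollary.
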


It is of interest to estimate the discrepancy of  these constants  $c(\a,d,p)$ and $C(\a,d,p)$ compared to  the Morrey space norm 
$$|\!\!| u_\infty|\!\!|_{M^{d(p-1)/\a}_q}=\left(\frac{\s_d}{d-\frac{\a}{p-1}}\right)^{1/q}  s(\a,d,p)$$ of the singular stationary solution $u_\infty>0$ of \rf{nlh-a} which exists for $p>1+\frac{\a}{d-\a}$; here 
\be
\s_d=\frac{2\pi^{d/2}}{\Gamma\left(\frac{d}{2}\right)}\label{pole}
\ee
 is the area of the unit sphere ${\mathbb S}^{d-1}$ in ${\mathbb R}^d$. 
This singular stationary solution $u_\infty>0$ is homogeneous, see  \cite[Prop. 2.1]{BP}, 
\be
 u_\infty(x)=s(\a,d,p)|x|^{-\frac{\a}{p-1}}\label{uC}
\ee 
with  the constant 
\be
s(\a,d,p)= \left(
\frac{2^\a}{\Gamma\left(\frac{\a}{2(p-1)}\right)} 
 \frac{\Gamma\left(\frac{d}{2}-\frac{\a}{2(p-1)}\right) \Gamma\left(\frac{p\a}{2(p-1)}\right)}{\Gamma\left(\frac{d}{2}-\frac{p\a}{2(p-1)}\right)} 
\right)^{\frac{1}{p-1}}.\label{sC}
\ee
Note that asymptotically 
\be
s(\a,d,p)\approx c_{\a,p}d^{\frac{\a}{2(p-1)}}\ \ \ \ {\rm as\ \ \ }d\to\infty \label{s-as}
\ee
with constants $c_{\a,p}$ independent of $d$. 

Of course, there are many interesting behaviors of solutions (and still open questions) for the initial data of intermediate size satisfying 
$$
c(\a,d,p)\le |\!\!| u_0|\!\!|_{M^{d(p-1)/\a}}\le C(\a,d,p),
$$ 
and/or suitable pointwise estimates comparing the initial condition  $u_0$ with the singular solution $u_\infty$, see e.g. \cite{BP}. 
One of the results in this direction is \cite[Theorem 2.6]{BP}: 
if  $\a\in(0,2)$, $p>1+\frac{\a}{d-\a}(>1+\frac{\a}{d})$, $u=u(x,t)$ is a solution of problem \rf{nlh-a}--\rf{ini}   with   $0\le  u_0(x)\le  u_\infty(x)$
(plus some qualitative assumptions like $u_0\in M^{d(p-1)/\alpha}(\R^d)\cap M^{\ts}(\R^d)$ with $\ts>d(p-1)/\alpha$, $\lim_{x\to 0}|x|^\ap u(x,t)=\lim_{x\to \infty}|x|^\ap u(x,t)=0$, uniformly\ in\ \  $t\in(0,T)$), 
then $u$  can be continued to a global in time solution which still satisfies the bound $0\le u(x,t)\le  u_\infty(x)$.  
 
This is a natural extension of properties of the Cauchy problem \rf{nlh}, \rf{ini} studied in, e.g.,  \cite{QS,S} and \cite{B-bl}.

\section{Estimates of discrepancy}  
Similarly to the considerations in \cite{BZ-2} on blowup for radial solutions of chemotaxis systems, we determine asymptotic (with respect to the variable of  dimension $d\to\infty$) discrepancy between bounds in sufficient conditions for blowup either  in terms of multiple of the singular solution or in terms of critical value of the radial concentration and therefore the Morrey norm of the initial data for the model problem \rf{nlh-a}--\rf{ini}.

\begin{theorem}
(i) For each $\alpha\in(0,2]$ and $p>1+\frac{\a}{d}$ there exists a constant $\nu_{\a,p}$ independent of the dimension $d$ such that if $N>\nu_{\a,p}$, then each solution of the Cauchy problem \rf{nlh-a}--\rf{ini} in $\R^d$ with the initial data $u_0(x)\ge N u_\infty(x)$ blows up in a finite time. 

(ii) For $\alpha=2$ and $p>1+\frac{2}{d}$ there exists a constant $\kappa_{2,p}$ independent of $d$ such that if the $\frac{d(p-1)}{2}$-radial concentration of $u_0\ge 0$ defined by 
\be
\xn u_0\xn_\frac{d(p-1)}{2}\equiv \sup_{r>0}r^{\frac{2}{p-1}-d}\int_{\{|y|<r\}}u_0(y)\dy\label{2-conc}
\ee 
satisfies 
\be
\xn u_0\xn_\frac{d(p-1)}{2}\ge\kappa \s_d d^{1/2(p-1)}
\label{su-2}
\ee
 with some $\kappa>\kappa_{2,p}$ then each solution of \rf{nlh-a}--\rf{ini} in $\R^d$  blows up in a finite time. 

(iii) For $\alpha\in(0,2)$ and $p>1+\frac{\a}{d}$( $p>1+\frac{\a}{d-2}$ so that $d\ge 3$) there exists a constant $\kappa_{\a,p}$ independent of $d$ such that if the $\frac{d(p-1)}{\a}$-radial concentration of $u_0\ge 0$ defined by 
\be
\xn u_0\xn_\frac{d(p-1)}{\a}\equiv \sup_{r>0}r^{\frac{\a}{p-1}-d}\int_{\{|y|<r\}}u_0(y)\dy\label{a-conc}
\ee 
satisfies 
\be
\xn u_0\xn_\frac{d(p-1)}{\a}\ge\kappa\s_d d^{\a/2(p-1)}\label{su-a}
\ee
 with $\kappa>\kappa_{\a,p}$ then each solution of \rf{nlh-a}--\rf{ini} in $\R^d$  blows up in a finite time. 
\end{theorem}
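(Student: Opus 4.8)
The plan is to derive all three statements from the non-continuation criterion of Theorem \ref{blow}, specialised to \rf{nlh-a}--\rf{ini}, i.e.\ to $\mathcal A=-(-\Delta)^{\alpha/2}$ and $F(u)=u^p$. For this $F$ one has $h(w)=\tfrac{1}{p-1}w^{1-p}$, hence $h^{-1}(T)=\bigl((p-1)T\bigr)^{-1/(p-1)}$, and since $W_T(0)=\bigl(e^{-T(-\Delta)^{\alpha/2}}u_0\bigr)(0)$ the condition \rf{blo} (equivalently \rf{blT}) becomes
$$\sup_{T>0}\;T^{1/(p-1)}\bigl(e^{-T(-\Delta)^{\alpha/2}}u_0\bigr)(0)\;>\;(p-1)^{-1/(p-1)}.$$
So in each case it suffices to bound the left-hand side from below and to verify that the ensuing requirement on $N$, respectively on $\kappa$, is met by a constant depending only on $\alpha,p$. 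I write $\beta=\alpha/(p-1)$, which lies in $(0,d)$ exactly when $p>1+\tfrac{\alpha}{d}$, note the key identity $\tfrac{\alpha}{2(p-1)}=\tfrac{\beta}{2}$, and use the self-similar form $P_{t,\alpha}(x)=t^{-d/\alpha}R(xt^{-1/\alpha})$ with $R=P_{1,\alpha}\ge 0$, $\int R=1$, radially non-increasing and obeying \rf{GW}.

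For part (i) I would insert $u_0\ge Nu_\infty=Ns(\alpha,d,p)|x|^{-\beta}$ and rescale $y=T^{1/\alpha}z$:
$$\bigl(e^{-T(-\Delta)^{\alpha/2}}u_0\bigr)(0)\ge Ns(\alpha,d,p)\int_{\R^d}P_{T,\alpha}(y)|y|^{-\beta}\dy=Ns(\alpha,d,p)\,T^{-1/(p-1)}\,\kappa_\beta(d),$$
where $\kappa_\beta(d)=\int_{\R^d}R(z)|z|^{-\beta}\,{\rm d}z<\infty$ because $\beta<d$ and $R$ decays as in \rf{GW}. Thus the criterion holds once $N>(p-1)^{-1/(p-1)}/\bigl(s(\alpha,d,p)\kappa_\beta(d)\bigr)$, and it remains to show $\inf_{d>\beta}s(\alpha,d,p)\kappa_\beta(d)>0$. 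To that end I would compute $\kappa_\beta(d)$ by subordination, $R(z)=\int_0^\infty(4\pi\tau)^{-d/2}e^{-|z|^2/(4\tau)}\eta(\tau)\dta$ with $\eta$ the density at time $1$ of the $\tfrac\alpha2$-stable subordinator ($\eta=\delta_1$ if $\alpha=2$), combined with the elementary Gaussian identity $\int_{\R^d}(4\pi\tau)^{-d/2}e^{-|y|^2/(4\tau)}|y|^{-\beta}\dy=(4\tau)^{-\beta/2}\Gamma(\tfrac{d-\beta}2)/\Gamma(\tfrac d2)$; this gives $\kappa_\beta(d)=4^{-\beta/2}\,\tfrac{\Gamma((d-\beta)/2)}{\Gamma(d/2)}\,\mathbb{E}[S^{-\beta/2}]$ with $\mathbb{E}[S^{-\beta/2}]<\infty$ independent of $d$. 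Rewriting \rf{sC} via $\tfrac{\alpha}{2(p-1)}=\tfrac\beta2$ and $\tfrac{p\alpha}{2(p-1)}=\tfrac{\beta+\alpha}2$, the whole $d$-dependence of $s(\alpha,d,p)\kappa_\beta(d)$ is carried by $\bigl[\Gamma(\tfrac{d-\beta}2)/\Gamma(\tfrac{d-\beta-\alpha}2)\bigr]^{1/(p-1)}\Gamma(\tfrac{d-\beta}2)/\Gamma(\tfrac d2)$, which by $\Gamma(x+a)/\Gamma(x+b)\asymp x^{a-b}$ tends to $(d/2)^{\beta/2}(d/2)^{-\beta/2}=1$ as $d\to\infty$ and diverges as $d\searrow\beta$; being positive and continuous on $(\beta,\infty)$ with positive limits at both ends, it has a strictly positive infimum (this also matches \rf{s-as}). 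One then takes $\nu_{\alpha,p}=(p-1)^{-1/(p-1)}/\inf_{d>\beta}s(\alpha,d,p)\kappa_\beta(d)$.

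For parts (ii) and (iii) the hypothesis controls only one scale. With $M=\xn u_0\xn_{d(p-1)/\alpha}$ and $\mu(\rho)=\int_{\{|y|<\rho\}}u_0$, for every $\varepsilon>0$ pick $\rho_0$ with $\mu(\rho_0)\ge(1-\varepsilon)M\rho_0^{d-\beta}$. Since $R$ is radially non-increasing, $\inf_{|y|<\rho_0}P_{T,\alpha}(y)=T^{-d/\alpha}R(\rho_0T^{-1/\alpha})$, hence
$$\bigl(e^{-T(-\Delta)^{\alpha/2}}u_0\bigr)(0)\ge T^{-d/\alpha}R(\rho_0T^{-1/\alpha})\,\mu(\rho_0)\ge(1-\varepsilon)M\,\rho_0^{d-\beta}\,T^{-d/\alpha}R(\rho_0T^{-1/\alpha}).$$
Setting $\lambda=\rho_0T^{-1/\alpha}$ makes $\rho_0$ cancel, so $T^{1/(p-1)}\bigl(e^{-T(-\Delta)^{\alpha/2}}u_0\bigr)(0)\ge(1-\varepsilon)M\lambda^{d-\beta}R(\lambda)$; letting $\varepsilon\to0$ and optimising in $\lambda>0$, the criterion holds once $M\sup_{\lambda>0}\lambda^{d-\beta}R(\lambda)>(p-1)^{-1/(p-1)}$. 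It therefore suffices that $\sigma_d d^{\beta/2}\sup_{\lambda>0}\lambda^{d-\beta}R(\lambda)$ be bounded below by a positive $c(\alpha,p)$ independent of $d$; then $\kappa_{\alpha,p}=(p-1)^{-1/(p-1)}/c(\alpha,p)$ works. For $\alpha=2$ (part (ii)), $R(\lambda)=(4\pi)^{-d/2}e^{-\lambda^2/4}$, the maximum is at $\lambda^2=2(d-\beta)$, and Stirling's formula applied to that value and to $\sigma_d=2\pi^{d/2}/\Gamma(d/2)$ shows $\sigma_d d^{\beta/2}\sup_\lambda\lambda^{d-\beta}R(\lambda)\to\infty$ (of order $\sqrt d$, so the threshold is in fact not sharp here). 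For $\alpha\in(0,2)$ (part (iii)), inserting the subordination formula into $\lambda^{d-\beta}R(\lambda)$ and applying Laplace's method to the $\tau$-integral, whose integrand peaks at $\tau=\lambda^2/(2d)$, reduces $\sigma_d d^{\beta/2}\sup_\lambda\lambda^{d-\beta}R(\lambda)$, as $d\to\infty$, to a fixed positive multiple of $\sup_{u>0}u^{1-\beta/2}\eta(u)<\infty$, again $d$-independent. In both regimes the quantity stays positive on the remaining, bounded range of admissible $d$, so $c(\alpha,p)>0$ exists.

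The conceptual part — reduction to Theorem \ref{blow}, and the two scalings that remove the dependence on $T$ in (i) and on $\rho_0$ in (ii)--(iii) — is short; the main obstacle is the uniform-in-$d$ control of $s(\alpha,d,p)\kappa_\beta(d)$ and of $\sigma_d d^{\beta/2}\sup_\lambda\lambda^{d-\beta}R(\lambda)$. This rests on the exact cancellation $\tfrac{\alpha}{2(p-1)}=\tfrac\beta2$ together with sharp large-$d$ asymptotics — Stirling quotients for the Gamma terms, and subordination plus Laplace's method for the $\alpha$-stable density $R=P_{1,\alpha}$ in growing dimension, in particular the fact that in the rescaled variable its relevant mass sits at distances of order $\sqrt d$, not of order $1$, so that choosing $T\sim\rho_0^\alpha$ would be far from optimal. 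One must also keep track of the $d$-dependence of the constants implicit in \rf{GW} to make sure the bounds on $\kappa_\beta(d)$ and on the tail of $R$ are genuinely dimension-free.
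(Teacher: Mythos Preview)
Your proposal is correct and follows essentially the same route as the paper: reduction to the criterion $\sup_{T>0}T^{1/(p-1)}\bigl(e^{-T(-\Delta)^{\alpha/2}}u_0\bigr)(0)>(p-1)^{-1/(p-1)}$, then for (i) computing the semigroup on $u_\infty$ via Bochner subordination and Gamma-ratio asymptotics to get a $d$-independent bound, and for (ii)--(iii) using radial monotonicity of $R$ to reduce to the sphere/boundary value $\sup_{\lambda>0}\lambda^{d-\beta}R(\lambda)$ and estimating this with Stirling and subordination. The only notable difference is cosmetic: for the lower bound in (iii) the paper restricts the $\tau$-integral to an explicit window $[\tau_0,\tau_0+h]$ with $h\asymp\sqrt d$ rather than invoking Laplace's method abstractly, which is the same idea made concrete.
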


\begin{remark}
The $\frac{d(p-1)}{\a}$-radial concentration defined in \rf{a-conc} is comparable with this Morrey norm in $M^{{d(p-1)}/{\a}}(\R^d)$ 
\be
c_d|\!\!|u_0|\!\!|_{M^{d(p-1)/{\a}}}\le 
\sup_{r>0}r^{\frac{\a}{p-1}-d}\int_{\{|y|<r\}}u_0(y)\dy\le |\!\!|u_0|\!\!|_{M^{d(p-1)/{\a}}}. 
\ee 
However,   the comparison constant $c_d$ depends on $d$, cf. \cite[Proposition 7.1]{BKZ2}).  
\end{remark}

\begin{proof}
A more detailed analysis of condition \rf{bl-a} reveals that 
\be
\sup_{t>0}t^{\frac{1}{p-1}}{\rm e}^{-t(-\Delta)^{\a/2}}u_0(0)>c_{\a,p}\label{suff-a}
\ee
is a sufficient condition for blowup, with some constant $c_{\a,p}>0$ {\em independent\,} of $d$. For $\a=2$ we simply have $c_{2,p}=\left(\frac{1}{p-1}\right)^{\frac{1}{p-1}}$, see \cite{B-bl}.

First, we compute 
\bea
K_{2,p}(d)&\equiv&\sup_{t>0}t^{\frac{1}{p-1}}{\rm e}^{t\Delta}(u_\infty)(0)\nonumber\\
&=&s(2,d,p)\sup_{t>0}t^{\frac{1}{p-1}-\frac{d}{2}}\s_d\int_0^\infty {\rm e}^{-r^2/4t}(4\pi)^{-d/2}r^{-\frac{2}{p-1}+d-1}\dr\nonumber\\
&=& s(2,d,p)4^{-\frac{1}{p-1}}\frac{2}{\Gamma\left(\frac{d}{2}\right)} \int_0^\infty{\rm e}^{-\tau}\tau^{\frac{d-1}{2}-\frac{1}{p-1}-\frac12}\dta\nonumber\\
&=&s(2,d,p)2^{1-\frac{2}{p-1}}\frac{\Gamma\left(\frac{d}{2}-\frac{1}{p-1}\right)}{\Gamma\left(\frac{d}{2}\right)}\nonumber\\
&\asymp&d^{\frac{1}{p-1}-\frac{1}{p-1}}\asymp 1.\label{k-2}
\eea
The last two lines follow using the relation 
\be
\frac{\Gamma\left(z+a\right)}{\Gamma\left(z+b\right)}\asymp z^{a-b}\ \ {\rm as\ } z\to\infty,\label{G-a-b}
\ee
see \cite{TE}, which is an immediate consequence of the Stirling formula 
\be
\Gamma(z+1)\approx \sqrt{2\pi z}\, z^z{\rm e}^{-z}\ \ {\rm as\ \ }z\to \infty.\label{Stir}
\ee

Next, for $\a\in(0,2)$ we need a representation of the kernel of the semigroup ${\rm e}^{-t(-\Delta)^{\a/2}}$ using  the Bochner subordination formula, cf. \cite[Ch. IX.11]{Y}
\be
{\rm e}^{-t(-\Delta)^{\alpha/2}}=\int_0^\infty f_{t,\alpha}(\lambda){\rm e}^{\lambda\Delta}\dl, \ \ {\rm or}\ \ P_{t,\a}(x)=\int_0^\infty f_{t,\alpha}(\lambda)P_{t,2}(x)\dl,\label{subord}
\ee
with some functions $f_{t,\alpha}(\lambda)\ge 0$ {\em independent } of $d$. In fact, the subordinators $f_{t,\alpha}$ satisfy 
$ 
{\rm e}^{-ta^\alpha}=\int_0^\infty f_{t,\alpha}(\lambda){\rm e}^{-\lambda a}\dl,
$ 
so that they have selfsimilar form $f_{t,\alpha}(\lambda)=t^{-\frac{1}{\alpha}}f_{1,\alpha}\left(\lambda t^{-\frac{1}{\alpha}}\right)$.

Then, we have extensions of the previous computations 
\bea
K_{\a,p}(d)&=&\sup_{t>0}t^{\frac{1}{p-1}}{\rm e}^{-t(-\Delta)^{\a/2}}\left(\frac{s(\a,d,p)}{|x|^{\frac{a}{p-1}}}\right)(0)\nonumber\\
&=&s(\a,d,p)\sup_{t>0}t^{\frac{1}{p-1}-\frac{d}{\a}}\s_d\int_0^\infty R\left(\frac{r}{t^{1/\a}}\right)r^{-\frac{\a}{p-1}+d-1}\dr\nonumber\\
&=&s(\a,d,p)\s_d\int_0^\infty R(\r)\r^{d-1-\frac{\a}{p-1}}\,{\rm d}\r \nonumber\\ 
&=&s(\a,d,p)\s_d\int_0^\infty\int_0^\infty f_{1,\a}(\lambda)(4\pi)^{-d/2}\lambda^{-d/2}{\rm e}^{-\r^2/4\lambda}\r^{d-1-\frac{\a}{p-1}}\dl \,{\rm d}\r \nonumber\\
 &\approx&s(\a,d,p)\frac{1}{\Gamma\left(\frac{d}{2}\right)}4^{-\frac{\a}{2(p-1)}}\int_0^\infty f_{1,\a}(\lambda)\lambda^{-\frac{\a}{2(p-1)}}\dl \times \int_0^\infty {\rm e}^{-\tau}\tau^{\frac{d}{2}-1-\frac{\a}{2(p-1)}}\dta\nonumber\\
 &\asymp& s(\a,d,p) \frac{\Gamma\left(\frac{d-\frac{\a}{p-1}}{2}\right)}{\Gamma\left(\frac{d}{2}\right)}\nonumber\\
&\asymp& d^{\frac{\a}{2(p-1)}-\frac{\a}{2(p-1)}}\asymp 1\label{k-a}
\eea
by representation \rf{subord},  relations \rf{s-as} and \rf{G-a-b}. 

Using relations \rf{k-2}, \rf{k-a} and the comparison principle, we see that 
$$
N\kappa_{\a,p}(d)>c_{\a,p}$$
 suffices to a finite time blowup, thus (i) follows since the bound for $K_{\a,p}(d)$ is $d$-independent.

(ii) Now, let us compute the asymptotics of the expression in condition \rf{bl-a} for $\a=2$ and  the normalized Lebesgue measure  ${\rm d}S$ on the unit sphere ${\mathbb S}^{d-1}$ 
\bea
L_{2,p}(d)&=&\sup_{t>0}t^{\frac{1}{p-1}}{\rm e}^{-t\Delta}\left(\frac{{\rm d}S}{\s_d}\right)\nonumber\\
&=&\sup_{t>0}(4\pi)^{-d/2}t^{\frac{1}{p-1}-\frac{d}{2}}{\rm e}^{-1/4t}\nonumber\\
&=&4^{-\frac{1}{p-1}}\pi^{-d/2} \left(\frac{1}{\rm e}\left(\frac{d}{2}-\frac{1}{p-1}\right)\right)^{\frac{d}{2}-\frac{1}{p-1}}
\nonumber\\
 &\approx&4^{-\frac{1}{p-1}}\frac{1}{\s_d}\frac{1}{\Gamma\left(\frac{d}{2}\right)} \frac{\Gamma\left(\frac{d}{2}-\frac{1}{p-1}+1\right)}{\left(2\pi\left(\frac{d}{2}-\frac{1}{p-1}\right)\right)^{1/2}}
\nonumber\\
&\asymp&\frac{1}{\s_d}d^{\frac{1}{2}-\frac{1}{p-1}}. \label{as-2-L}
\eea
Indeed, for $\frac{d}{2}>\frac{1}{p-1}$ the quantity $\max_{t>0}t^{\frac{1}{p-1}-\frac{d}{2}}{\rm e}^{-1/4t}$ is attained for $t_0=\left(4\left(\frac{d}{2}-\frac{1}{p-1}\right)\right)^{-1}$, and then relation \rf{G-a-b} is used.

(iii) 
For $\a\in(0,2)$, instead of \rf{su-2}  an analogous sufficient condition is of different order than for $\a=2$, namely \rf{su-a}
\be
\xn u_0\xn_{\frac{d(p-1)}{\a}}= \sup_{r>0}r^{\frac{\a}{p-1}-d}\int_{\{|y|<r\}}u_0(y)\dy\ge \kappa_{\a,p}\s_dd^{\a/2(p-1)}\label{su-a}
\ee 
with a constant $\kappa_{\a,p}$ {\em independent} of $d$. 
Indeed, for the normalized Lebesgue measure  ${\rm d}S$ on the unit sphere ${\mathbb S}^{d-1}$ we have an upper bound for the quantity 
\bea
L_{\a,p}(d)&=&\sup_{t>0}t^{\frac{1}{p-1}}{\rm e}^{-t(-\Delta)^{\a/2}}\left(\frac{{\rm d}S}{\s_d}\right)\nonumber\\
&=&\sup_{t>0}t^{\frac{1}{p-1}-\frac{d}{\a}}
R\left(\frac{1}{t^{1/\a}}\right)\nonumber\\
&=&\sup_{\r>0}\r^{d-\frac{\a}{p-1}}R(\r)\nonumber\\
&=&\sup_{\r>0}\int_0^\infty f_{1,\a}(\lambda)(4\pi\lambda)^{-d/2}\r^{d-\frac{\a}{p-1}}{\rm e}^{-\r^2/4\lambda}\dl\label{aaa}\\
&\le& 4^{-\frac{\a}{2(p-1)}}\frac{2}{\s_d\Gamma\left(\frac{d}{2}\right)}\int_0^\infty \sup_{\r>0}\left( f_{1,\a}\left(\frac{\r^2}{4\tau}\right)\left(\frac{\r^2}{4\tau}\right)^{1-\frac{\a}{2(p-1)}}\right)
\tau^{\frac{d}{2}-\frac{\a}{2(p-1)}-1}{\rm e}^{-\tau}\dta\nonumber\\
 &\asymp&\frac{1}{\s_d\Gamma\left(\frac{d}{2}\right)}\Gamma\left(\frac{d-\frac{a}{p-1}}{2}\right)\nonumber\\
&\asymp&\frac{1}{\s_d}d^{-\frac{\a}{2(p-1)}}\nonumber
\eea 
since representation \rf{subord}, formulas \rf{pole} and \rf{G-a-b} hold.   

For an asymptotic lower bound on the quantity $L_{\a,p}(d)$, begin with the observation that for $\beta=\frac{d}{2}-\frac{\a}{2(p-1)}-1$ ($\beta>0$ since $p>1+\frac{\a}{d-2}$)
\bea
m\equiv\max_{\tau>0}{\rm e}^{-\tau}\tau^{\beta} 
&=& {\rm e}^{-\tau_0}\tau_0^{\beta}\ \ \ \ {\rm with}\ \ \ \  \tau_0= \beta  \nonumber\\ 
&=&{\rm e}^{-\beta}\beta^\beta\nonumber\\
&\approx&\Gamma\left(\beta+1\right)\frac{1}{\sqrt{2\pi\beta}}\label{min}
\eea
holds by \rf{Stir}. 
Now, let $h\asymp d^{\frac12}$. 
It is easy to check that 
 $$\frac{1}{m}\min_{[\tau_0,\tau_0+h]}{\rm e}^{-\tau}\tau^{\beta} \ge \eta$$ 
 for some $\eta>0$, uniformly in $d$. 
Indeed, 
$$
\log\frac{(d+h)^d{\rm e }^{-d-h}}{d^d{\rm e}^{-d} }=d\log\left(1+\frac{h}{d}\right)-h\approx d\frac{h}{d}-\frac{dh^2}{2d^2}-h= {\mathcal O}\left(\frac{h^2}{2d}\right).
$$
From formulas \rf{aaa} and \rf{min} we infer 
\bea 
L_{\alpha,p}(d)&\ge& 4^{-\frac{\a}{2(p-1)}}\frac{2}{\s_d\Gamma\left(\frac{d}{2}\right)}
\sup_{\r>0} \int_{\tau_0}^{\tau_0+h} \left( f_{1,\alpha}\left(\frac{\r^2}{4\tau}\right)\left(\frac{\r^2}{4\tau}\right)^{1-\frac\alpha{2(p-1)}}\right)\tau^{\frac{d}{2}-\frac{\alpha}{2(p-1)}-1}{\rm e}^{-\tau}\,{\rm d}\tau \nonumber\\
&\ge&4^{-\frac{\a}{2(p-1)}}\frac{2}{\s_d\Gamma\left(\frac{d}{2}\right)}\frac{\eta h}{\sqrt{d}}\Gamma\left(\frac{d}{2}-\frac{\a}{2(p-1)}\right)\nonumber\\
&\asymp&\frac{1}{\s_d}d^{-\frac{\a}{2(p-1)}}\nonumber
\eea 
Therefore $L_{\alpha,p}(d)\asymp  \frac{1}{\s_d}d^{-\frac\alpha{2(p-1)}}$ 
holds.
This is an estimate of optimal order and {\em different} from its counterpart  for $\alpha=2$. 
Now, it is clear that a sufficient condition for blowup is satisfied if 
$$NL_{\a,p}(d)>c_{\a,p}$$
 with either $N=\kappa\s_dd^{1/(p-1)}$ if $\a=2$ or $N=\kappa\s_dd^{\a/2(p-1)}$ if $\a\in(0,2)$. 
\end{proof}


\begin{thebibliography}{99}

\bibitem{A}
M. Alfaro, {\it Fujita blow up phenomena and hair trigger effect: The role of dispersal tails}, Ann. Inst. Henri Poincar\'e, Analyse non Lin\'eaire {\bf 34} (2017), 1309--1327.

\bibitem{ADB}
D. Andreucci, E. DiBenedetto, 
{\it On the Cauchy problem and initial traces for a class of 
evolution equations with strongly nonlinear sources}, 
Ann. Sc. Norm. Super. Pisa, Cl. Sci., IV. Ser. {\bf 18} (1991), 363--441. 


 \bibitem{B-bl}
P. Biler, {\it Blowup  versus global in time existence of  solutions for nonlinear heat equations}, 1--13,  Topol. Methods Nonlin. Analysis, to appear; arXiv:1705.03931v2.
 
\bibitem{B-book}
P. Biler, 
{\it Singularities of Solutions to Chemotaxis Systems}, book in preparation; De\,Gruyter,  Series in Mathematics and Life Sciences. 

\bibitem{BKZ2}
 P. Biler, G. Karch, J. Zienkiewicz, {\it Large global-in-time solutions to a nonlocal model of  chemotaxis},   Adv. Math. {\bf 330} (2018), 834--875. 

\bibitem{BKP}
P. Biler, G. Karch, D. Pilarczyk, 
{\it Global   radial solutions in classical Keller-Segel chemotaxis model}, 1--20, submitted. arXiv:1807.02628.  

\bibitem{BP}
P. Biler,   D. Pilarczyk, 
{\it Around a singular solution of a nonlocal nonlinear heat equation}, 1--20, submitted. arXiv:1807

\bibitem{BZ-2}
P. Biler,   J. Zienkiewicz,  
{\it Blowing up radial solutions in the minimal Keller-Segel chemotaxis model}, submitted. arXiv:1807.02633. 

\bibitem{BGB}
J. Burczak, R. Granero-Belinch\'on, 
{\it Global solutions for a supercritical drift-diffusion equation}, 
Adv. Math. {\bf 295} (2016), 334--367. 


\bibitem{CCR}
E. Chasseigne, M. Chaves, J. D. Rossi, {\it Asymptotic behavior for nonlocal diffusion equations},  J. Math. pures appl. {\bf 86} (2006), 271--291. 

\bibitem{Fu}
H. Fujita, {\it On the blowing up of solutions of the Cauchy problem for $u_t=\Delta u+u^{1+\alpha}$}, J. Fac. Sci. Univ. Tokyo Sect. I {\bf 13} (1966), 109--124. 
 

\bibitem{GM-Q} 
J. Garc\'{\i}a-Meli\'an, F. Quir\'os, 
{\it Fujita exponents for evolution problems with nonlocal diffusion}, 
J. Evol. Equ. {\bf 10} (2010), 147--161. 


\bibitem{Lem}
P.-G. Lemari\'e-Rieusset, 
{\it Small data in an optimal Banach space for the parabolic-parabolic and parabolic-elliptic Keller-Segel equations in the whole space}, Adv. Diff. Eq. {\bf 18} (2013), 1189--1208.  

\bibitem{Lem2} 
P.-G. Lemari\'e-Rieusset, 
{\it Sobolev multipliers, maximal functions and parabolic equations with a quadratic nonlinearity}, 
J. Funct. Anal. \textbf{274} (2018), 659--694. 

\bibitem{QS}
P. Quittner, Ph. Souplet, {\it Superlinear parabolic problems. Blow-up, global existence and
steady states}, Birkh\"auser Advanced Texts, Basel (2007). 
 
 \bibitem{S}
 Ph. Souplet, {\it Morrey spaces and classification of global solutions for a supercritical semilinear heat equation in $\mathbb R^n$},  J. Funct. Anal. {\bf 272} (2017), 2005--2037. 

\bibitem{SW} 
Ph. Souplet, F. B. Weissler, 
{\it Regular self-similar solutions of the nonlinear heat equation with initial data above the singular steady state}, Ann. Institut Henri Poincar\'e, Analyse non lin\'eaire {\bf 20} (2003), 213--235.  

\bibitem{Su} 
S. Sugitani, {\it On nonexistence of global solutions for some nonlinear integral  equations}, Osaka J. Math. {\bf 12} (1975), 45--51. 

\bibitem{TE} 
F. G. Tricomi, A. Erd\'elyi, 
\textit{The asymptotic expansion of a ratio of Gamma functions}, 
Pacific J. Math. \textbf{1} (1951), 133--142. 


\bibitem{Y}
K. Yosida, {\it Functional Analysis}, 6th ed., Springer, Berlin, 1980. 

 
\end{thebibliography}
\end{document}